\documentclass[11pt,leqno]{amsart}

\usepackage{amsthm}
\usepackage{amsmath}
\usepackage{amssymb}
\usepackage[all,cmtip]{xy}
\usepackage[T1]{fontenc}
\usepackage{mathrsfs}
\usepackage{enumerate}

\usepackage{color}


\theoremstyle{plain}
\newtheorem{thm}{Theorem}

\newtheorem{prop}{Proposition}
\newtheorem{cor}{Corollary}
\theoremstyle{definition}
\newtheorem{defn}{Definition}
\newtheorem{exmp}{Example}
\newtheorem{rem}{Remark}

\newcommand{\Z}{\mathbb{Z}}
\newcommand{\C}{\mathbb{C}}
\newcommand{\N}{\mathbb{N}}

\DeclareMathOperator{\ob}{ob}
\DeclareMathOperator{\mor}{mor}

\DeclareMathOperator{\Gal}{Gal}

\subjclass[2000]{primary 16W50; secondary 16D25, 16S99}

\keywords{graded rings, skew groupoid rings, commutants, ideals, maximal commutative subrings}

\thanks{The authors wish to thank Jonas T. Hartwig for stimulating and useful discussions on the topic of this paper.
The first author was partially supported by The Swedish Research Council, The Swedish Foundation for International Cooperation in Research and Higher Education (STINT), The Crafoord Foundation, The Royal Physiographic Society in Lund, The Swedish Royal Academy of Sciences and "LieGrits", a Marie Curie Research Training Network funded by the European Community as project MRTN-CT 2003-505078.
}

\title{The Ideal Intersection Property for Groupoid Graded Rings}
\date{}

\author{Johan \"{O}inert and Patrik Lundstr\"{o}m}

\address{Johan \"{O}inert,
Department of Mathematical Sciences,
University of Copenhagen,
Universitetsparken 5,
DK-2100 Copenhagen \O,
Denmark}
\email{oinert@math.ku.dk}

\address{Patrik Lundstr\"{o}m,
University West, Department of Engineering Science,
SE-46186 Trollh\"{a}ttan, Sweden}
\email{Patrik.Lundstrom@hv.se}

\begin{document}

\maketitle

\begin{abstract}
We show that if a groupoid graded ring has
a grading satis\-fying a certain nondegeneracy property,
then the commutant of the center of the principal component of the ring
has the ideal intersection property, that is it
intersects nontrivially every nonzero ideal of the ring.
Furthermore, we show that for skew groupoid algebras with
commutative principal component, the principal component
is maximal commutative if and only if it has the
ideal intersection property.
\end{abstract}

\section{Introduction}

Let $R$ be a ring.
By this we always mean that $R$ is an additive group
equipped with a multiplication which is associative and unital.
If $X$ and $Y$ are nonempty subsets of $R$,
then $XY$ denotes the set of all finite sums of
elements of the form $xy$ with $x \in X$ and $y \in Y$.
The identity element of $R$ is denoted $1_R$ and
is always assumed to be nonzero.
We always assume that ring homomorphisms respect the multiplicative identities.
By the \emph{commutant}
of a subset $S$ of a ring $R$, denoted $C_R(S)$, we mean the set of elements
of $R$ that commute with each element of $S$;
the commutant $C_R(R)$ is called the \emph{center} of $R$ and will be denoted by $Z(R)$.

Suppose that $R'$ is a subring of $R$, i.e.
there is an injective ring homorphism  $R' \rightarrow R$.
By abuse of notation, we will, in that case,
often identify $R'$ with its image in $R$.
We say that $R'$ has the \emph{ideal intersection
property} in $R$ if $R' \cap I \neq \{ 0 \}$
for each nonzero ideal $I$ of $R$.
An important ring theoretic impetus for studying the ideal intersection
property is that it can be characterized
by saying that any ring homomorphism
$R \rightarrow S$ is injective whenever the
induced restriction $R' \rightarrow S$ is injective (see Section \ref{Application_Injectivity} for details).
Recall that if $R'$ is commutative, then it
is called a \emph{maximal commutative subring} of $R$
if $R'=C_R(R')$, i.e. $R'$ coincides with its commutant in $R$.
A lot of work has been devoted to the investigation of the
connection between maxi\-mal commutativity
and the ideal intersection property
(see \cite{coh}, \cite{fis}, \cite{for78}, \cite{lor},
\cite{lor79}, \cite{lor80}, \cite{mon78} and \cite{pas77}).
Recently (see \cite{oin06}, \cite{oin07}, \cite{oin08},
\cite{oin10} and \cite{oin09})
such a connection was established for the commutant
of the neutral component of strongly group graded rings
and crystalline graded rings
(see Theorem \ref{TheoremOne} and Theorem \ref{TheoremTwo} below).

Let $G$ be a group with neutral element $e$.
Recall that $R$ is said to be \emph{graded} by the group $G$
if there is a set of additive subgroups,
$R_s$, for $s \in G$, of $R$ such that $R =
\bigoplus_{s \in G} R_s$ and $R_s R_t \subseteq
R_{st}$, for $s,t \in G$.
The subring $R_e$ of $R$ is referred to as the \emph{neutral component} of $R$.
Let $R$ be a ring graded by the group $G$.
If $R_s R_t = R_{st}$, for $s,t \in G$,
then $R$ is called strongly graded.
For more details concerning group graded rings,
see e.g. \cite{nas}.

\begin{thm}\label{TheoremOne}
If a strongly group graded ring has a commutative
neutral component, then the commutant of
the neutral component has the ideal
intersection property.
\end{thm}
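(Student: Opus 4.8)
The plan is to prove the statement directly: given an arbitrary nonzero ideal $I$ of $R$, I will exhibit a nonzero element of $C_R(R_e) \cap I$. The guiding idea is a minimal-support argument. For $x = \sum_{s} x_s \in R$ with $x_s \in R_s$, write $\operatorname{supp}(x)$ for the finite set of $s \in G$ with $x_s \neq 0$. Among all nonzero elements of $I$ I will single out one whose support is as small as possible, and then show that, after a harmless normalization, such an element must commute with every element of $R_e$.

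First I would record two preliminary observations. The grading axiom $R_s R_t \subseteq R_{st}$ gives, for $r \in R_e$ and $x_s \in R_s$, that both $r x_s$ and $x_s r$ lie in $R_s$; hence membership in $C_R(R_e)$ decouples over the grading, that is, $x \in C_R(R_e)$ if and only if $r x_s = x_s r$ for all $r \in R_e$ and all $s \in G$. The second observation is the only place the strong grading hypothesis enters as a nondegeneracy statement: if $0 \neq a \in R_t$, then $R_{t^{-1}} a \neq \{0\}$. Indeed, $R_{t^{-1}} a = \{0\}$ would force $a \in R_e a = R_t R_{t^{-1}} a = \{0\}$, using that $1_R \in R_e = R_t R_{t^{-1}}$, a contradiction.

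Next comes the normalization step, which I expect to be the crux. Starting from any $0 \neq x \in I$ of minimal support, choose $t \in \operatorname{supp}(x)$ and, by the nondegeneracy observation applied to $x_t \in R_t$, pick $v \in R_{t^{-1}}$ with $v x_t \neq 0$. Since $I$ is a two-sided ideal, $vx \in I$; moreover $\operatorname{supp}(vx) \subseteq t^{-1}\operatorname{supp}(x)$, so $|\operatorname{supp}(vx)| \leq |\operatorname{supp}(x)|$, and the neutral component of $vx$ equals $v x_t \neq 0$ so that $vx \neq 0$. By minimality $vx$ again has minimal support, so I may replace $x$ by $vx$ and assume from the outset that $x_e \neq 0$. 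It is precisely here that strong grading is essential: it guarantees that translating the support to include the neutral element neither annihilates the element nor enlarges its support.

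Finally, for each $r \in R_e$ consider the commutator $rx - xr \in I$. Its neutral component is $r x_e - x_e r$, which vanishes because $R_e$ is commutative, so $\operatorname{supp}(rx - xr) \subseteq \operatorname{supp}(x) \setminus \{e\}$ is strictly smaller than $\operatorname{supp}(x)$. Minimality of the support then forces $rx - xr = 0$ for every $r \in R_e$; by the componentwise criterion this says exactly that $x \in C_R(R_e)$. Thus $x$ is a nonzero element of $C_R(R_e) \cap I$, which establishes the ideal intersection property. Commutativity of $R_e$ is used only at this last step, to collapse the neutral component of the commutator and trigger the contradiction with minimality.
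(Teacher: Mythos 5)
Your proof is correct and follows essentially the same route as the paper: the paper establishes Theorem \ref{TheoremOne} by specializing its proof of Theorem \ref{maintheorem}, which is exactly this minimal-support argument --- strong grading yields nondegeneracy (via $1_R \in R_e$ and $R_e = R_t R_{t^{-1}}$), nondegeneracy lets one normalize the minimal-support element so that its neutral component is nonzero, and commutators with elements of (the center of) the neutral component then have strictly smaller support, forcing them to vanish. The only differences are cosmetic: you argue directly rather than contrapositively, and you work in the group case (as the statement permits) rather than the groupoid case, which spares you the bookkeeping with objects $e \in \ob(G)$ and domains/codomains.
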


For a proof of Theorem \ref{TheoremOne},
see \cite[Theorem 4]{oin10}.
Recall from \cite{nauoyst} that a ring $R$ graded
by the group $G$ is called
crystalline graded if for each $s \in G$ there
is a nonzero element in $R_s$ with the
property that it freely generates $R_s$ both as
a left and a right $R_e$-module.

\begin{thm}\label{TheoremTwo}
If a crystalline graded ring has a commutative
neutral component, then the commutant of
the neutral component has the ideal
intersection property.
\end{thm}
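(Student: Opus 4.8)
The plan is to establish the ideal intersection property for $C_R(R_e)$ by a minimal-support argument. First I would unwind the crystalline structure. For each $s \in G$ fix a nonzero $u_s \in R_s$ freely generating $R_s$ both as a left and as a right $R_e$-module, so that $R_s = R_e u_s = u_s R_e$ and every element of $R$ is uniquely $\sum_s a_s u_s$ with $a_s \in R_e$ and almost all $a_s = 0$. Since $R_e = R_e u_e$ is free of rank one, $u_e$ is a unit and I normalize $u_e = 1_R$. For $a \in R_e$ the relation $u_s a \in R_s = R_e u_s$ defines $\sigma_s(a) \in R_e$ via $u_s a = \sigma_s(a) u_s$; the two-sided freeness forces each $\sigma_s$ to be a ring automorphism of $R_e$ (injectivity from right-freeness, surjectivity since $R_e u_s = u_s R_e$), with $\sigma_e = \operatorname{id}$. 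The computation I will use repeatedly is that, for $a \in R_e$ and $x = \sum_s a_s u_s$,
\[ ax - xa = \sum_s \bigl( a a_s - a_s \sigma_s(a) \bigr) u_s, \]
so that $ax - xa \in I$ whenever $x \in I$ for an ideal $I$, its support lies in that of $x$, and its degree-$e$ coefficient is $a a_e - a_e a$, which vanishes because $R_e$ is commutative.

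Now let $I$ be a nonzero ideal and choose $0 \neq x \in I$ whose support $\operatorname{supp}(x) = \{ s : a_s \neq 0 \}$ has minimal cardinality $N$ among nonzero elements of $I$. The first real step is to reduce to the case $e \in \operatorname{supp}(x)$. Picking $t \in \operatorname{supp}(x)$ and setting $y = u_{t^{-1}} x \in I$, left multiplication by $u_{t^{-1}}$ sends the degree-$s$ term to degree $t^{-1}s$, so $\operatorname{supp}(y) \subseteq t^{-1}\operatorname{supp}(x)$ has at most $N$ elements, while its degree-$e$ coefficient equals $\sigma_{t^{-1}}(a_t)\,(u_{t^{-1}} u_t)$. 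Here $\sigma_{t^{-1}}(a_t) \neq 0$ since $\sigma_{t^{-1}}$ is injective and $a_t \neq 0$, and $u_{t^{-1}} u_t \in R_e$ is a non-zero-divisor by the crystalline hypothesis (see \cite{nauoyst}); hence this coefficient is nonzero. Thus $y \neq 0$, so minimality gives $|\operatorname{supp}(y)| = N$ and $e \in \operatorname{supp}(y)$, and I may replace $x$ by $y$.

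With $e \in \operatorname{supp}(x)$ the argument closes. For every $a \in R_e$ the element $ax - xa$ lies in $I$, has support contained in $\operatorname{supp}(x)$, and by the computation above has vanishing degree-$e$ coefficient; hence $\operatorname{supp}(ax - xa) \subseteq \operatorname{supp}(x) \setminus \{ e \}$ has fewer than $N$ elements. Since $ax - xa \in I$, minimality of $N$ forces $ax - xa = 0$. As $a$ was arbitrary, $x$ commutes with all of $R_e$, i.e.\ $x \in C_R(R_e)$, and therefore $C_R(R_e) \cap I$ contains the nonzero element $x$, which is the desired property.

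The step I expect to be the main obstacle is the reduction to $e \in \operatorname{supp}(x)$, and it is exactly where the crystalline hypothesis is indispensable: one needs $\sigma_{t^{-1}}$ injective and $u_{t^{-1}} u_t$ a non-zero-divisor so that translating the support by $t^{-1}$ neither annihilates $x$ nor lowers its support below $N$. The former is automatic from two-sided freeness, but the latter is the genuine nondegeneracy input; without it the translate could collapse and a minimal-support element need not meet the commutant. This mirrors the role of strong gradation in Theorem \ref{TheoremOne} and foreshadows the nondegeneracy property needed in the groupoid graded generalization.
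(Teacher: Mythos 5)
Your proof is correct, and at its core it is the same argument this paper uses, inlined and specialized to the group-graded case rather than factored through the general machinery. The paper does not prove Theorem \ref{TheoremTwo} directly: it deduces it from Theorem \ref{maintheorem} (a minimal-support commutator argument valid for any groupoid grading that is right or left nondegenerate) together with the Proposition in Section \ref{ideals} asserting that crystalline gradings are right and left nondegenerate, the latter resting on the presentation, quoted from \cite[Corollary 1.7]{nauoyst}, of a crystalline graded ring as a crossed product in which each $\sigma_s$ is an automorphism and each $\alpha(s,t)$ is a non-zero-divisor. Your two steps correspond exactly: the passage from $x$ to $y=u_{t^{-1}}x$, justified by injectivity of $\sigma_{t^{-1}}$ and regularity of $u_{t^{-1}}u_t$, is precisely the nondegeneracy verification, and the commutator step (support of $ax-xa$ strictly smaller, hence $ax-xa=0$ by minimality) is the engine of the proof of Theorem \ref{maintheorem}. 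The remaining differences are organizational: you minimize support over all nonzero elements of $I$ and show the minimizer itself lies in $C_R(R_e)$, whereas the paper minimizes over $I\setminus C$ and derives a contradiction with $I\cap C=\{0\}$; and in the group case you avoid the groupoid bookkeeping (forcing $d(s)=c(s)=e$ and splitting $Z(R_0)=\bigoplus_{f}Z(R_f)$). What your route buys is a short, self-contained proof of Theorem \ref{TheoremTwo}; what the paper's route buys is the more general Theorem \ref{maintheorem}. One caveat, which you yourself correctly identified as the crux: the regularity of $u_{t^{-1}}u_t$ cannot be derived from the definition of crystalline graded as it is restated in this paper (two-sided freeness of each $R_s$ over $R_e$ alone). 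Indeed, the paper's own example of the algebra generated by $\C[T]$, $X$, $Y$ with $XY=YX=0$ has every homogeneous component free of rank one on both sides, and yet the conclusion of the theorem fails for it; so an appeal to the actual definition in \cite{nauoyst} is indispensable at this point. Since the paper makes the identical appeal in its nondegeneracy Proposition, your citation is not a gap but exactly the same external input the paper itself uses.
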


For a proof of Theorem \ref{TheoremTwo},
see \cite[Theorem 5]{oin10}.
The main purpose of this article is to simultaneously
generalize Theorem \ref{TheoremOne} and Theorem \ref{TheoremTwo}
as well as extending the grading from
groups to groupoids in the following way.

\begin{thm}\label{maintheorem}
If a groupoid graded ring has
a right (or left) nondegenerate grading,
then the commutant of the center of
the principal component of the ring
has the ideal intersection property.
\end{thm}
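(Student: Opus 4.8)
The plan is to argue by a minimal-support contradiction, in the spirit of the proofs of Theorem \ref{TheoremOne} and Theorem \ref{TheoremTwo}. Throughout, write each $a \in R$ as a finite sum $a = \sum_{s \in \mor(G)} a_s$ with $a_s \in R_s$, let $\operatorname{Supp}(a) = \{ s : a_s \neq 0 \}$, and call the part of $a$ lying in the principal component $R_1 = \bigoplus_e R_e$ (the sum over the identity morphisms $e$) its \emph{principal component}. Denote by $d(s)$ and $r(s)$ the domain and range of a morphism $s$, so that $s s^{-1} = e_{r(s)}$, and recall that right nondegeneracy means $a_s R_{s^{-1}} \neq \{0\}$ for every nonzero $a_s \in R_s$; the left case is entirely symmetric, obtained by multiplying on the opposite side.

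First I would establish the key reduction lemma: every nonzero ideal $I$ contains a nonzero element whose principal component is nonzero. Given $0 \neq a \in I$, I pick $s \in \operatorname{Supp}(a)$ and, by right nondegeneracy, an element $b \in R_{s^{-1}}$ with $a_s b \neq 0$. Then $ab \in I$ and $a_s b$ lies in $R_{s s^{-1}} = R_{e_{r(s)}}$, an identity component. The point to verify is that this contribution is not cancelled: for $t \in \operatorname{Supp}(a)$ the product $a_t b$ lands in $R_{t s^{-1}}$ and vanishes unless $d(t) = d(s)$, while $t s^{-1} = s s^{-1}$ forces $t = s$ by right cancellation in the groupoid. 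Hence the principal component of $ab$ equals $a_s b \neq 0$, and moreover $|\operatorname{Supp}(ab)| \leq |\operatorname{Supp}(a)|$, since $t \mapsto t s^{-1}$ is injective on the relevant part of the support.

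Next, among all nonzero elements of $I$ with nonzero principal component — a nonempty collection by the lemma — I would choose one, $a$, of minimal support, and aim to show $a \in C_R(Z(R_1))$. Fixing $z \in Z(R_1)$, writing $z = \sum_e z_e$, and forming the commutator $w = za - az \in I$, a short computation with the groupoid composition rules shows that every homogeneous component of $w$ lies in some $R_t$ with $t \in \operatorname{Supp}(a)$, so $\operatorname{Supp}(w) \subseteq \operatorname{Supp}(a)$. Crucially, for each identity morphism $e$ the $R_e$-component of $w$ equals $z_e a_e - a_e z_e$, which vanishes because $z$ commutes with $a_e \in R_1$. Thus $w$ has zero principal component, and since $a$ had at least one nonzero identity component, $\operatorname{Supp}(w)$ is strictly smaller than $\operatorname{Supp}(a)$.

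The decisive step is to contradict $w \neq 0$. Because $w$ has vanishing principal component I cannot appeal to minimality directly; instead I reapply the reduction lemma to $w$, producing a nonzero $w b' \in I$ with nonzero principal component and $|\operatorname{Supp}(w b')| \leq |\operatorname{Supp}(w)| < |\operatorname{Supp}(a)|$, which contradicts the minimality of $a$. Hence $w = 0$ for every $z \in Z(R_1)$, so $a$ is a nonzero element of $C_R(Z(R_1)) \cap I$, as required. I expect the main obstacle to be the bookkeeping of the groupoid multiplication — tracking exactly which of the products $a_t b$, $z_e a_t$, and $a_t z_e$ are nonzero and into which (identity or non-identity) component they fall — together with the one genuinely subtle point that the commutator has zero principal component, which is what forces the second application of the nondegeneracy/shift argument rather than an immediate invocation of minimality.
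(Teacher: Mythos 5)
Your argument is correct, and it runs on the same engine as the paper's proof---a minimal-support element, the nondegeneracy shift $x \mapsto xb$ with $b \in R_{t^{-1}}$ to push a homogeneous component into the principal part, and a commutator with $Z(R_0)$ to strictly shrink support---but it is organized genuinely differently. The paper proves the contrapositive: assuming $I \cap C_R(Z(R_0)) = \{0\}$, it picks $x \in I \setminus C_R(Z(R_0))$ of minimal support size $N$ and uses minimality to normalize $x$ step by step (first multiplying by $1_e$, which requires the decomposition $1_R = \sum_{e \in \ob(G)} 1_e$ from Proposition \ref{identityprop}(a), so that every $s$ in the support satisfies $c(s)=e$; then using nondegeneracy to arrange $e \in \operatorname{Supp}(x)$ and $d(s)=c(s)=e$ for all $s$ in the support); after this normalization the commutator $dx - xd$, $d \in Z(R_0)$, has support of cardinality $< N$ and is therefore zero outright, since any element of $I$ of smaller support either lies in $C_R(Z(R_0)) \cap I = \{0\}$ or contradicts minimality. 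You instead prove the statement directly, with a different minimality class (nonzero elements of $I$ with nonzero principal component, rather than elements of $I \setminus C_R(Z(R_0))$), a cleanly isolated reduction lemma, and no normalization to a single object: your identity $w = \sum_t \left( z_{c(t)} a_t - a_t z_{d(t)} \right)$ handles arbitrary supports, and the delicate points of your lemma (that $ts^{-1}$ is an identity morphism only when $t=s$, and that $t \mapsto ts^{-1}$ is injective, so the principal component of $ab$ is exactly $a_s b$ and the support does not grow) are exactly right. What the paper's normalization buys is that minimality applies to the commutator immediately; what your route buys is a more modular structure and no use of $1_R = \sum_e 1_e$ at all, the price being---as you yourself flag---a second application of the reduction lemma, forced because $w$ has zero principal component and hence lies outside your minimality class. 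Both versions are sound; yours is a legitimate alternative organization of the same underlying argument.
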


For the definitions of \emph{principal component}, groupoid graded rings
and right (and left) \emph{nondegeneracy} of a grading,
see Definition \ref{defgradedring} and
Definition \ref{defnonzeroidealproperty}.
For a proof of Theorem \ref{maintheorem}, see Section \ref{ideals}.
In general, the ideal intersection property alone
is of course not a sufficient condition
for the principal component in a graded ring to be maximal commutative
(see e.g. \cite[Example 2.3]{oin07}).
The secondary purpose of this article
is to show that this is however true for a large class
of skew category algebras.

\begin{thm}\label{skewcatalg}
Suppose that $R$ is a skew category
algebra with a commutative principal
component $A$ and that the grading of $R$ is defined by a
category with finitely many objects.
If $A$ has the ideal intersection property in $R$,
then $A$ is maximal commutative in $R$.
\end{thm}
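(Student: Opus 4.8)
The plan is to prove the sharp statement $C_R(A) = A$, i.e.\ to show that the ideal intersection property forces the commutant of the principal component to collapse onto $A$ itself. Since $A$ is commutative we always have $A \subseteq C_R(A)$, so everything rests on the reverse inclusion. First I would record that, because the category has finitely many objects, the identity decomposes as a finite sum $1_R = \sum_{x \in \ob} 1_x$ of the local units attached to the objects, and that each $1_x$ lies in $A$. Multiplying on the left and right by these idempotents isolates, for any element of $R$, the homogeneous pieces indexed by the source and target objects of the grading morphisms. For a homogeneous $a_s u_s$ supported on a morphism $s \colon x \to y$ one has $1_y (a_s u_s) = a_s u_s = (a_s u_s) 1_x$, while $1_z(a_s u_s) = (a_s u_s)1_z = 0$ for the remaining objects. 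Equating homogeneous coefficients in $bc = cb$ for $c$ running through the blocks $A_z$ then shows simultaneously that $C_R(A)$ is a graded subring and that any nonzero homogeneous commutant element is supported on an \emph{endomorphism} $s \colon x \to x$: for $s \colon x \to y$ with $x \neq y$, commutation with $1_x$ forces $a_s = 0$. Hence it suffices to prove that $C_R(A) \cap R_s = \{0\}$ for every non-identity endomorphism $s$.

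Suppose, for contradiction, that some non-identity $s \colon x \to x$ carries a nonzero element $a_s u_s \in C_R(A)$. Commuting it with an arbitrary $c \in A_x$ and using commutativity of $A_x$ yields the key relation $a_s(\sigma_s(c) - c) = 0$ for all $c \in A_x$, where $\sigma_s \colon A_x \to A_x$ implements the grading twist. The goal is then to manufacture from $a_s u_s$ a nonzero two-sided ideal $I$ of $R$ with $I \cap A = \{0\}$, contradicting the hypothesis. The natural first candidate is $I = R\,(a_s u_s)\,R$, whose homogeneous support consists of morphisms $p \circ s \circ q$ and which is nonzero since it contains $a_s u_s$; whenever no such composite is an identity morphism one gets $I \cap A = \{0\}$ at once by the grading. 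To cover the remaining case in which $s$ admits composites returning to an identity — so that $R\,(a_s u_s)\,R$ does meet $A$ — I would instead pass to the modified generator $v = a_s u_s - a_s$, which still lies in $C_R(A)$ and is nonzero because its $s$-component is $a_s$. Subtracting the principal shadow $a_s$ is what ties the identity-morphism component of any element of $RvR$ rigidly to its $s$-component, so that a nonzero element of $RvR$ can never be purely principal.

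The main obstacle is exactly the verification that $RvR \cap A = \{0\}$ in this second case. Here I expect to argue componentwise: expanding a general product $r_1 v r_2$ into homogeneous pieces and projecting onto the principal component, the relation $a_s(\sigma_s(c) - c) = 0$ confines $a_s$ to the part of $A_x$ on which $\sigma_s$ acts trivially, and there the twist disappears, so that an augmentation-type homomorphism $\varepsilon$ sending each grading generator to the corresponding local unit becomes available, restricts injectively to $A$, and annihilates $v$; thus $RvR \subseteq \ker \varepsilon$ and $A \cap \ker \varepsilon = \{0\}$. Finiteness of the object set enters both in supplying the local units and in keeping all these homogeneous decompositions finite. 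Once $RvR$ (or $R\,(a_s u_s)\,R$) is shown to avoid $A$, it contradicts the ideal intersection property; hence no non-identity morphism supports a nonzero commutant element, giving $C_R(A) = A$ and therefore the maximal commutativity of $A$.
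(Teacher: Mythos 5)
Your outline is essentially the paper's own proof: you argue the contrapositive, use the local units and the gradedness of $C_R(A)$ (Proposition~\ref{gradedcommutant}(d)) to produce a nonzero $a u_s \in C_R(A)$ with $s$ a non-identity endomorphism at an object $e$, pass to the modified generator $v = a u_s - a u_e$, and introduce the coefficient map $\varepsilon(x u_t) = x$, aiming to conclude from $RvR \subseteq \ker\varepsilon$ and $\varepsilon|_A = {\rm id}_A$ that $RvR$ is a nonzero ideal meeting $A$ trivially. (Your preliminary case, taking $I = R(au_s)R$ when no composite $psq$ is an identity morphism, is correct but superfluous: the $v$-argument covers all cases at once.) The one genuine problem is your justification of the crucial inclusion $RvR \subseteq \ker\varepsilon$, which does not hold up as written.

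The map $\varepsilon$ is only a homomorphism of additive groups; it is \emph{not} a ring homomorphism (for $r$ with $c(r)=d(r)$ and $c \in A_{d(r)}$ one has $\varepsilon\bigl((1u_r)(cu_{d(r)})\bigr) = \sigma_r(c)$ while $\varepsilon(1u_r)\varepsilon(cu_{d(r)}) = c$), so $\ker\varepsilon$ need not be an ideal and the inference ``$\varepsilon$ annihilates $v$; thus $RvR \subseteq \ker \varepsilon$'' is a non sequitur. The mechanism you propose to repair this is also off target: the relation $a(\sigma_s(c)-c)=0$ for all $c \in A_e$ does not confine $a$ to the fixed subring of $\sigma_s$ (it says $a$ annihilates the image of $\sigma_s - {\rm id}$), and no twist ``disappears'' on the left: in a product $xu_r \cdot v \cdot yu_t$ the homomorphism $\sigma_r$ genuinely acts on the coefficients. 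What closes the gap is the direct computation, which is the entire content of the paper's proof: since $1_R = \sum_{f \in \ob(G)} u_f$ is a finite sum, $RvR$ is spanned by homogeneous products $xu_r\,v\,yu_t$, and these vanish unless $d(r)=e=c(t)$; the key relation applied to the right-hand factor gives $v\,yu_t = a\sigma_s(y)u_{st} - ay\,u_t = ay\,u_{st} - ay\,u_t$, and then $xu_r\bigl(ay\,u_{st} - ay\,u_t\bigr) = x\sigma_r(ay)\,u_{rst} - x\sigma_r(ay)\,u_{rt}$. The twist $\sigma_r$ does not vanish from the picture; it simply hits both terms \emph{equally}, so the two homogeneous components carry the same coefficient and $\varepsilon$ maps the element to zero. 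With this three-line computation substituted for your sentence, your argument coincides exactly with the paper's proof.
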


For the definition of skew category algebras,
see Example \ref{example3} in Section \ref{gradedrings}.
For a proof of Theorem \ref{skewcatalg},
see the end of Section \ref{ideals}.
By Theorem \ref{maintheorem} and Theorem \ref{skewcatalg}
the succeeding result follows immediately.

\begin{thm}\label{skewgroupoidtheorem}
Suppose that $R$ is a skew groupoid
algebra with a commutative principal
component $A$ and that the grading of $R$ is defined by a
groupoid with finitely many objects.
Then $A$ is maximal commutative in $R$
if and only if $A$ has the ideal intersection
property in $R$.
\end{thm}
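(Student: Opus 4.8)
The plan is to obtain this equivalence by combining Theorem \ref{maintheorem} and Theorem \ref{skewcatalg}, as the sentence preceding the statement already indicates. The crucial structural remark is that a skew groupoid algebra is a particular instance of a skew category algebra, since a groupoid is nothing but a category in which every morphism is invertible; in particular, a groupoid with finitely many objects is a category with finitely many objects. Consequently both of the earlier theorems apply in the present situation, and I would treat the two implications of the biconditional in turn.

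The implication ``ideal intersection property $\Rightarrow$ maximal commutativity'' requires no additional work: the hypotheses of Theorem \ref{skewcatalg} are satisfied verbatim, so that theorem yields at once that $A$ is maximal commutative in $R$ as soon as $A$ has the ideal intersection property in $R$.

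For the converse, suppose that $A$ is maximal commutative in $R$, i.e.\ $A = C_R(A)$. Because $A$ is commutative, its center is the whole of $A$, so $Z(A) = A$ and hence $C_R(Z(A)) = C_R(A) = A$. The plan is then to apply Theorem \ref{maintheorem}, which asserts that $C_R(Z(A))$ has the ideal intersection property in $R$ whenever the grading of $R$ is right (or left) nondegenerate. Substituting $C_R(Z(A)) = A$ immediately gives that $A$ has the ideal intersection property in $R$, completing the equivalence.

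The single nonroutine point — and thus the main obstacle — is to verify that the grading of a skew groupoid algebra is right (or left) nondegenerate in the sense of Definition \ref{defnonzeroidealproperty}, so that Theorem \ref{maintheorem} is genuinely available. I expect this to follow directly from the construction of skew groupoid algebras recalled in Example \ref{example3}: each nonzero homogeneous component is generated over the principal component by an element associated with the corresponding morphism, and the invertibility of morphisms in a groupoid provides invertible homogeneous elements that witness the required nondegeneracy. Once this verification is in place, the two implications above combine to prove the theorem.
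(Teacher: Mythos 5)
Your proposal is correct and follows essentially the same route as the paper: the paper also obtains this theorem immediately by combining Theorem \ref{maintheorem} and Theorem \ref{skewcatalg}, with the maximal commutativity hypothesis giving $C_R(Z(A)) = C_R(A) = A$. The ``nonroutine point'' you flag is handled in the paper by a separate proposition showing that the standard grading of a groupoid crossed product with finitely many objects is right (and left) nondegenerate whenever each $\alpha(s,s^{-1})$ is not a zero divisor, which covers skew groupoid algebras since there $\alpha \equiv 1$; your sketch of that verification (namely $a u_s \cdot u_{s^{-1}} = a u_{c(s)} \neq 0$) is exactly the argument.
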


Note that Theorem \ref{skewgroupoidtheorem} simultaneously generalizes
\cite[Theorem 3.4]{oinert09}, \cite[Corollary 6]{oinlun08}
and \cite[Proposition 10]{oinlun08}.

\section{Graded Rings}\label{gradedrings}

In this section, we recall the definition of
category graded rings from \cite{lu06}
(see Definition \ref{defgradedring}) and we give some examples
of such rings (see Example \ref{example1}--\ref{example5}).
In the end of this section, we show a result
(see Proposition \ref{identityprop}) concerning
identity elements in category graded rings
and category crossed products
for use in Section \ref{ideals}.

Suppose that $G$ is a category.
The family of objects of $G$ is denoted $\ob(G)$;
we will often identify an object in $G$ with
its associated identity morphism.
The family of morphisms in $G$ is denoted $\ob(G)$;
by abuse of notation, we will often write $s \in G$
when we mean $s \in \mor(G)$.
The domain and codomain of a morphism $s$ in $G$ is denoted
$d(s)$ and $c(s)$ respectively.
We let $G^{(2)}$ denote the collection of composable
pairs of morphisms in $G$, i.e. all $(s,t)$ in
$\mor(G) \times \mor(G)$ satisfying $d(s)=c(t)$.
A category is called cancellable (a groupoid)
if all its morphisms are both monomorphisms
and epimorphisms (isomorphisms).
A category is called a monoid if
it only has one object.

\begin{defn}\label{defgradedring}
Let $R$ be a ring and $G$ a category.
A set of additive subgroups,
$R_s$, for $s \in G$, of $R$ is said to be a \emph{$G$-filter} in $R$ if for all $s ,t \in G$,
we have $R_s R_t \subseteq R_{st}$ if
$(s,t) \in G^{(2)}$ and
$R_s R_t = \{ 0 \}$ otherwise.
The ring $R$ is said to be \emph{graded} by the category $G$
if there is a $G$-filter,
$R_s$, for $s \in G$, in $R$ such that
$R = \bigoplus_{s \in G} R_s$.
If $R$ is graded by $G$ and
$R_s R_t = R_{st}$, for $(s,t) \in G^{(2)}$,
then $R$ is called \emph{strongly graded}.
By the \emph{principal component} of $R$ we mean
the set $R_0 := \bigoplus_{e \in \ob(G)} R_e$.
If $X$ is a subset of $R$ we put $X_s = X \cap R_s$,
for $s \in G$; the set $X$ is said to be \emph{homogeneous}
if $X = \sum_{s \in G} X_s$.
A subring of $R$ is said to be \emph{graded} if it
is homogeneous as a subset of $R$.
\end{defn}

\begin{rem}
We always assume that filters and gradings are
defined over small categories, that is categories
where the collection of morphisms is a set.
\end{rem}

\begin{rem}
Filtrations of rings, in the sense of e.g. \cite{Lang}, are examples of $G$-filters with
$G=(\N,+)$.
\end{rem}

\begin{rem}
Groupoid graded rings have recently arisen as natural objects
of study in Galois theory for weak Hopf algebras (see e.g. \cite{cae2004}).
\end{rem}

Category graded rings are very general mathematical objects.
To give some flavor of this we now show some examples of such rings.

\begin{exmp}\label{example1}
Recall from \cite{oinlun08} that category crossed products are defined by
first specifying a crossed system i.e. a quadruple
$\{ A,G,\sigma,\alpha \}$ where
$A$ is the direct sum of rings $A_e$, $e \in \ob(G)$,
$\sigma_s : A_{d(s)} \rightarrow A_{c(s)}$, for $s \in G$,
are ring homomorphisms and
$\alpha$ is a map from $G^{(2)}$ to the
disjoint union of the sets $A_e$, for $e \in \ob(G)$,
with $\alpha(s,t) \in A_{c(s)}$,
for $(s,t) \in G^{(2)}$, satisfying the following five conditions:
\begin{equation}\label{idd}
\sigma_e = {\rm id}_{A_e}
\end{equation}
\begin{equation}\label{identityr}
\alpha(s,d(s)) = 1_{A_{c(s)}}
\end{equation}
\begin{equation}\label{identityl}
\alpha(c(t),t) = 1_{A_{c(t)}}
\end{equation}
\begin{equation}\label{associativee}
\alpha(s,t) \alpha(st,r) = \sigma_s(\alpha(t,r)) \alpha(s,tr)
\end{equation}
\begin{equation}\label{algebraa}
\sigma_s(\sigma_t(a)) \alpha(s,t) = \alpha(s,t) \sigma_{st}(a)
\end{equation}
for all $e \in \ob(G)$, all $(s,t,r) \in G^{(3)}$ and all $a \in A_{d(t)}$.
Let $A \rtimes_{\alpha}^{\sigma} G$ denote the collection of formal sums
$\sum_{s \in G} a_s u_s$, where $a_s \in A_{c(s)}$, $s \in G$,
are chosen so that all but finitely many of them are zero.
Define addition on $A \rtimes_{\alpha}^{\sigma} G$
by
\begin{equation}\label{addition}
\sum_{s \in G} a_s u_s + \sum_{s \in G} b_s u_s =
\sum_{s \in G} (a_s + b_s)u_s
\end{equation}
and define multiplication on $A \rtimes_{\alpha}^{\sigma} G$ by
the bilinear extension of the relation
\begin{equation}\label{multiplication}
(a_s u_s)(b_t u_t) = a_s \sigma_s(b_t) \alpha(s,t) u_{st}
\end{equation}
if $(s,t) \in G^{(2)}$ and $(a_s u_s)(b_t u_t) = 0$ otherwise
where $a_s \in A_{c(s)}$ and $b_t \in A_{c(t)}$.
By (\ref{associativee}) the multiplication on
$A \rtimes_{\alpha}^{\sigma} G$ is associative.
We will often identify $A$ with
$\bigoplus_{e \in \ob(G)} A_e u_e$; this ring will be referred
to as the coefficient ring of $A \rtimes_{\alpha}^{\sigma} G$.
It is clear that $A \rtimes_{\alpha}^{\sigma} G$ is a category graded ring
and it is strongly graded by $G$
if and only if each $\alpha(s,t)$, for $(s,t) \in G^{(2)}$,
has a left inverse in $A_{c(s)}$.
\end{exmp}

\begin{exmp}\label{example2}
We say that $A \rtimes_{\alpha}^{\sigma} G$ is a
twisted category algebra
if each $\sigma_s$, $s \in G$, with $d(s)=c(s)$
equals the identity map on $A_{d(s)}=A_{c(s)}$;
in that case the category crossed product is
denoted $A \rtimes_{\alpha} G$.
Now we consider two well known special
cases of this construction when
all the rings $A_e$, for $e \in \ob(G)$,
coincide with a fixed ring $D$.

If $G$ is a group, then $D \rtimes_{\alpha} G$
coincides with the usual construction of a twisted group algebra.

If $n$ is a positive integer and we define
$G$ to be the groupoid with the $n$ first positive integers as objects and
as arrows all ordered pairs $(i,j)$, for $1 \leq i,j \leq n$,
equipped with the partial binary operation
defined by letting $(i,j)(k,l)$ be defined and equal to $(i,l)$
precisely when $j=k$, then
$D \rtimes_{\alpha} G$ is the ring of
twisted square matrices over $D$ of size $n$.
\end{exmp}

\begin{exmp}\label{example3}
We say that $A \rtimes_{\alpha}^{\sigma} G$
is a skew category algebra
if $\alpha(s,t) = 1_{A_{c(s)}}$ for all $(s,t) \in G^{(2)}$;
in that case the category crossed product is
denoted $A \rtimes^{\sigma} G$.
Now we consider two well known special
cases of this construction when
all the rings $A_e$, for $e \in \ob(G)$,
coincide with a fixed ring $D$.

If $G$ is a group, then $D \rtimes^{\sigma} G$
is the usual skew group ring of $G$ over $D$.

Even in the case when $\sigma_s = {\rm id}_D$
for all $s \in G$, we get interesting examples of algebras.
Indeed, if $G$ is a a quiver, i.e. a directed graph
where loops and multiple arrows between two vertices are allowed,
then the category crossed product
coincides with the quiver algebra $DG$.
If, on the other hand, $G$ equals the groupoid in the
end of Example \ref{example2}, then
the category crossed product coincides
with the ring of $n \times n$ matrices over $D$.
\end{exmp}

\begin{exmp}\label{example4}
Recall from \cite{lu05} the construction of
crossed product algebras $F \rtimes_{\alpha}^{\sigma} G$
defined by finite separable (not necessarily normal) field extensions $L/K$.
Let $N$ denote a normal closure of $L/K$ and
let $\Gal(N/K)$ denote the Galois group of $N/K$.
Furthermore, let $F$ denote the direct sum of the conjugate fields
$L_i$, for $i = 1, \ldots , n$; put $L_1 = L$.
If $1 \leq i,j \leq n$, then let $G_{ij}$ denote the set of
field isomorphisms from $L_j$ to $L_i$. If $s \in G_{ij}$,
then we indicate this by writing $d(s) = j$ and $c(s) = i$.
If we let $G$ be the union of the $G_{ij}$, for $1 \leq i,j \leq n$, then $G$ is a groupoid.
For each $s \in G$, let $\sigma_s = s$.
Suppose that $\alpha$ is a map
$G^{(2)} \rightarrow \bigsqcup_{i=1}^n L_i$
with $\alpha(s,t) \in L_{c(s)}$, for $(s,t) \in G^{(2)}$ satisfying
(\ref{identityr}), (\ref{identityl}) and (\ref{associativee})
for all $(s,t,r) \in G^{(3)}$ and all $a \in L_{d(t)}$.
If $L/K$ is normal, then $G = \Gal(N/K)$
and hence $F \rtimes_{\alpha}^{\sigma} G$
coincides with the usual crossed product
defined by the Galois extension $L/K$.
\end{exmp}

\begin{exmp}\label{example5}
Suppose that $K$ is a commutative ring and that $M_n(K)$
denotes the ring of square matrices of size $n$ over $K$.
Furthermore, let $e_{ij}$ denote
the matrix in $M_n(K)$ with 1 in the $ij$:th
position and 0 elsewhere.
By an example of Dade \cite{dade} the decomposition
$R := M_3(K) = R_0 \oplus R_1$, where
$$R_0 = Ke_{11} + Ke_{22} + Ke_{23} + Ke_{32} + Ke_{33}$$
and
$$R_1 =Ke_{21} + Ke_{31} + Ke_{12} + Ke_{13},$$
is an example of a strongly ${\Bbb Z}_2$-graded ring
which is not a group crossed product with this grading. In fact,
since ${\rm dim}_K(R_0) = 5 \neq 4 = {\rm dim}_K(R_1)$,
it follows that $R$ is not even crystalline graded.
Inspired by this example, we now show that there
are nontrivial strongly groupoid graded rings which are not,
in a natural way, groupoid crossed products
in the sense defined in Example \ref{example1}.
Indeed, let $G$ be the unique thin
connected groupoid with two objects.
Explicitly this means that the morphisms of $G$ are $e$, $f$,
$s$ and $t$; multiplication is defined by the relations
$e^2 = e$, $f^2 = f$, $es = s$, $te = t$,
$sf = s$, $ft = t$, $st = e$ and $ts = f$.
Define a 13-dimensional
$K$-subalgebra $R$ of $M_5(K)$ by
$R = R_e \oplus R_f \oplus R_s \oplus R_t$ where
$$
\begin{array}{ll}
R_e = Ke_{11} + Ke_{33} & R_f = Ke_{22} + Ke_{44} + Ke_{45} + Ke_{54} + Ke_{55} \\
R_s = Ke_{12} + Ke_{34} + Ke_{35} & R_t = Ke_{21} + Ke_{43} + Ke_{53}
\end{array}
$$
A straightforward calculation shows that
$$R_e R_e = R_e, \ R_f R_f = R_f, \ R_e R_s = R_s, \
R_t R_e = R_t, \ R_s R_f = R_s, \ R_f R_t = R_t.$$
Therefore $R$ is strongly graded by $G$.
However, since ${\rm dim}_K(R_f) = 5 \neq 3 = {\rm dim}_K(R_t)$, the
left $R_f$-module $R_t$ can not be free on one generator.
\end{exmp}

For use in the next section, we gather some facts
concerning identity elements in category graded rings.

\begin{prop}\label{identityprop}
Suppose that $R$ is a ring graded by a category $G$.
\begin{enumerate}[{\rm (a)}]
\item If $G$ is cancellable, then $1_R \in R_0$.
In particular, the same conclusion holds if $G$ is a groupoid.

\item If $R = A \rtimes_{\sigma}^{\alpha} G$ is
a category crossed product algebra, then $R$ has an identity
element if and only if $G$ has finitely many objects;
in that case $1_R = \sum_{e \in \ob(G)} u_e$.
\end{enumerate}
\end{prop}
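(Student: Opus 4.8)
For part (a), the plan is to write the identity in homogeneous components, $1_R = \sum_{s \in G} c_s$ with $c_s \in R_s$ and all but finitely many $c_s$ equal to zero, and to show that $c_s = 0$ whenever $s \notin \ob(G)$. First I would fix a morphism $t$ and a homogeneous $r \in R_t$ and expand $r = 1_R r = \sum_s c_s r$, noting that $c_s r \in R_{st}$ when $d(s) = c(t)$ and vanishes otherwise. Comparing the $R_t$-components on both sides, the key point is that $st = t$ forces $s$ to be the identity morphism at $c(t)$: since $t = c(t) \circ t$ and $t$ is an epimorphism (which holds because $G$ is cancellable), right cancellation gives $s = c(t)$. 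Hence $c_{c(t)}\, r = r$ for every $r \in R_t$, and the symmetric computation with right multiplication, using that $t$ is a monomorphism, gives $r\, c_{d(t)} = r$.

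To finish (a) I would set $p := \sum_{e \in \ob(G)} c_e$, a genuine element of $R$ since the sum is finite. For homogeneous $r \in R_t$ the only surviving term of $p r = \sum_e c_e r$ is the one with $e = c(t)$, since composability forces $d(e) = e = c(t)$; thus $p r = c_{c(t)} r = r$, and symmetrically $r p = r$. So $p$ is a two-sided identity, and by uniqueness of the identity $p = 1_R$. This kills all components $c_s$ with $s \notin \ob(G)$, giving $1_R \in R_0$. Since every groupoid is cancellable, the final assertion of (a) follows.

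For part (b) I would treat the two implications separately. If $\ob(G)$ is finite, I claim $\sum_{e \in \ob(G)} u_e$ is the identity; by bilinearity it suffices to compute on a monomial. In $\big(\sum_e u_e\big)(a_s u_s)$ only the term $e = c(s)$ is composable by (\ref{multiplication}), and then (\ref{idd}) gives $\sigma_e = \mathrm{id}$ while (\ref{identityl}) gives $\alpha(c(s),s) = 1_{A_{c(s)}}$, so the product collapses to $a_s u_s$; the right identity property is symmetric, using (\ref{identityr}) and the fact that each $\sigma_s$ preserves the unit. Conversely, suppose $R$ has an identity $1_R = \sum_{s \in T} a_s u_s$ with $T$ finite. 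Evaluating $1_R\, u_e$ for a fixed object $e$, only the morphisms $s$ with $d(s) = e$ contribute, each term simplifying via (\ref{identityr}) to $a_s u_s$, so $1_R u_e = \sum_{d(s)=e} a_s u_s$. Comparing with $u_e = 1_{A_e} u_e$ forces $a_e = 1_{A_e} \neq 0$, whence $e \in T$; as $e$ ranges over all objects, $\ob(G) \subseteq T$ is finite, and then uniqueness of the identity together with the first implication identifies $1_R$ with $\sum_{e} u_e$.

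I expect the only real subtlety to lie in part (a): correctly isolating the homogeneous component and recognizing that $st = t$ (respectively $ts = t$) pins $s$ down to an identity morphism is exactly where cancellability is used, and this is what replaces the trivial cancellation available in a group. The computations in (b) are routine bookkeeping with the crossed-system axioms, the one thing to watch being that $\sum_{e} u_e$ is a legitimate (finite) element precisely when $\ob(G)$ is finite.
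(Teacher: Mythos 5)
Your proof is correct, and both parts turn on the same pivots as the paper's: in (a) the observation that cancellability forces $st=t \Rightarrow s=c(t)$ (and $ts=t \Rightarrow s=d(t)$), and in (b) routine computations with the crossed-system axioms (\ref{idd})--(\ref{identityl}). The only genuine structural difference is how (a) is finished. The paper runs the argument self-referentially on the components of $1_R$ itself: writing $1_R=\sum_s 1_s$, it expands $1_t = 1_R 1_t = \sum_s 1_s 1_t$, uses cancellability to conclude $1_s 1_t = 0$ whenever $s \notin \ob(G)$, and then kills the offending components in one line via $1_s = 1_s 1_R = \sum_t 1_s 1_t = 0$. You instead act on arbitrary homogeneous $r \in R_t$ to get $c_{c(t)}r = r = r\,c_{d(t)}$, assemble $p = \sum_{e \in \ob(G)} c_e$, verify that $p$ is a two-sided identity, and invoke uniqueness of the identity to get $p = 1_R$. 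Your route is slightly longer but yields a stronger intermediate fact (the object part of $1_R$ already acts as the identity on every homogeneous component), whereas the paper's is shorter and never needs the auxiliary element $p$. In (b) your argument is essentially the paper's, and if anything tighter: in the ``only if'' direction you extract everything from $1_R u_e = u_e$ alone, while the paper also invokes $u_e 1_R$ and $u_e 1_R u_e$; and in the ``if'' direction you correctly point out that one needs (\ref{idd}) and the unitality of the maps $\sigma_s$ in addition to (\ref{identityr}) and (\ref{identityl}), details the paper compresses into ``immediately follows''.
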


\begin{proof}
(a) Let $1_R=\sum_{s\in G}1_s$ where $1_s \in R_s$ for $s\in G$. If $t \in G$, then
$1_t = 1_R 1_t = \sum_{s \in G} 1_s 1_t$.
Since $G$ is cancellable, this implies that
$1_s 1_t = 0$ whenever $s \in G \setminus \ob(G)$.
Therefore, if $s \in G \setminus \ob(G)$, then
$1_s = 1_s 1_R = \sum_{t \in G} 1_s 1_t = 0$.
The last part follows from the fact that groupoids
are cancellable categories.

(b) First we show the ''if'' statement.
If $G$ has finitely many objects, then,
by equations (\ref{identityr}) and (\ref{identityl})
from Example \ref{example1}, it
immediately follows that $\sum_{e \in \ob(G)} u_e$
is an identity element of $R$.
Now we show the ''only if'' statement.
Suppose that $1_R = \sum_{s \in G} a_s u_s$
for some $a_s \in A_{c(s)}$, $s \in G$,
satisfying $a_s = 0$ for all but finitely
many $s \in G$. Since $u_e = 1_R u_e = u_e 1_R$
holds for all $e \in \ob(G)$, it follows
that $a_s = 0$ whenever $c(s) \neq d(s)$.
Since $u_e = u_e 1_R u_e$ holds for all
$e \in \ob(G)$, it follows that
$a_s = 0$ whenever $s \notin \ob(G)$
and $a_e = 1_{A_e} \neq 0$ for all $e \in \ob(G)$.
Therefore $\ob(G)$ is finite
and $1_R = \sum_{e \in \ob(G)} u_e$.
\end{proof}

\section{Commutants and Ideals}\label{ideals}

In this section, we prove Theorem \ref{maintheorem} and
Theorem \ref{skewcatalg}.
To this end, we first gather some facts concerning
commutants in graded rings.

\begin{prop}\label{gradedcommutant}
Suppose that $R$ is a ring graded by a category $G$.
\begin{enumerate}[{\rm (a)}]
\item If $X$ is a subset of $R$, then
$\{ C_R(X)_s \mid s \in G \}$
is a $G$-filter in $R$.

\item If $A$ is a homogeneous additive subgroup of $R$,
then $$C_R(A)_s  = \bigcap_{u \in G} C_{R_s}(A_u)$$
for all $s \in G$.

\item The set $C_R(R_0)$ is a
graded subring of $R$ with
$$C_R(R_0)_s =
\left\{
\begin{array}{l}
C_{R_s}(R_{d(s)}), \ {\rm if} \ c(s)=d(s), \\
\left\{ r_s \in R_s \mid R_{c(s)}r_s = r_s R_{d(s)}
= \{ 0 \} \right\}, \ {\rm otherwise.} \\
\end{array}
\right.$$

\item If $1_R \in R_0$, then $C_R(R_0)$ is a
graded subring of $R$ with
$$C_R(R_0)_s =
\left\{
\begin{array}{l}
C_{R_s}(R_{d(s)}), \ {\rm if} \ c(s)=d(s), \\
\{ 0 \}, \ {\rm otherwise.} \\
\end{array}
\right.$$ In particular, if $G$ is cancellable
or $R$ is a category crossed product and $G$ has finitely many objects, then the same
conclusion holds.
\end{enumerate}
\end{prop}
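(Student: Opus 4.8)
The plan is to establish the four parts in order, since each later part builds on the earlier ones. For part (a), I would verify directly that the collection $\{C_R(X)_s\}$ satisfies the defining conditions of a $G$-filter from Definition \ref{defgradedring}. The inclusion $C_R(X)_s C_R(X)_t \subseteq R_{st}$ when $(s,t)\in G^{(2)}$ and the vanishing $C_R(X)_s C_R(X)_t = \{0\}$ otherwise are both inherited from the ambient $G$-filter $\{R_s\}$, because $C_R(X)_s = C_R(X)\cap R_s \subseteq R_s$. So the content is just to observe that products of homogeneous commutant pieces stay inside the prescribed graded components, which is immediate from the grading of $R$ itself.

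For part (b), the key step is to decompose the commuting condition degree by degree. Given $r_s \in R_s$, the element $r_s$ commutes with all of $A$ precisely when it commutes with each homogeneous piece $A_u$, and I would exploit the directness of the decomposition $R=\bigoplus_{s\in G} R_s$ to separate the homogeneous components of the commutators $[r_s, a_u]$. Concretely, $r_s \in C_R(A)_s$ means $r_s a_u = a_u r_s$ for every $a_u \in A_u$ and every $u$; since $A$ is homogeneous and $R$ is graded, this single condition splits into the family of conditions $r_s \in C_{R_s}(A_u)$ for all $u\in G$, yielding the stated intersection.

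For part (c), I would apply part (b) with $A = R_0 = \bigoplus_{e\in \ob(G)} R_e$, so that $C_R(R_0)_s = \bigcap_{e\in\ob(G)} C_{R_s}(R_e)$, and then analyze which composability constraints survive. A morphism $s$ can have nonzero products with $R_e$ only when $e$ is the appropriate identity: the product $R_e r_s$ is forced into $R_{es}$, which is nonzero only for $e=c(s)$, and similarly $r_s R_e$ lives in $R_{se}$, nonzero only for $e=d(s)$. When $c(s)=d(s)$ the relevant commuting condition with that single object collapses to membership in $C_{R_s}(R_{d(s)})$, while commuting with all other objects is automatic; when $c(s)\neq d(s)$ the two one-sided products $R_{c(s)}r_s$ and $r_s R_{d(s)}$ are the only surviving terms, and requiring $r_s$ to commute with $1_{R_e}$-type elements forces both to vanish, giving the second branch. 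I would also check that $C_R(R_0)$ is closed under multiplication (hence a graded subring), which follows from part (a) together with the fact that $C_R(R_0)$ is a subring as a commutant.

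Part (d) is a specialization: when $1_R\in R_0$, I would show the ''otherwise'' branch of part (c) degenerates to $\{0\}$. If $c(s)\neq d(s)$ and $r_s\in C_R(R_0)_s$, then writing $1_R = \sum_{e\in\ob(G)} 1_{R_e}$ and using $r_s = 1_R r_s = r_s 1_R$ together with the vanishing conditions $R_{c(s)}r_s = r_s R_{d(s)} = \{0\}$ from part (c) forces $r_s=0$. The final sentence then follows by invoking Proposition \ref{identityprop}: parts (a) and (b) there give $1_R\in R_0$ precisely when $G$ is cancellable, or when $R$ is a category crossed product with finitely many objects. I expect the main obstacle to be part (c), specifically the careful bookkeeping of domains and codomains that determines exactly when $R_e r_s$ or $r_s R_e$ can be nonzero; the directness of the grading is what makes the case split clean, but one must track the identity morphisms $c(s)$ and $d(s)$ precisely to land on the stated formula.
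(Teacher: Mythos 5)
Your parts (b) and (d), and your computation of the individual components $C_R(R_0)_s$ in (c), follow the paper's argument and are sound. However, there are two genuine gaps. First, in (a) you verify the wrong inclusion: for the family $\{ C_R(X)_s \mid s \in G\}$ to be a $G$-filter, Definition \ref{defgradedring} requires $C_R(X)_s C_R(X)_t \subseteq C_R(X)_{st}$ for composable $(s,t)$, not merely $C_R(X)_s C_R(X)_t \subseteq R_{st}$. Containment in $R_{st}$ is indeed inherited from the grading of $R$, but containment in $C_R(X)$ is a separate fact and is not ``immediate from the grading of $R$ itself''; the paper obtains it from the observation that $C_R(X)$ is a subring of $R$ (so the product stays in $C_R(X)$), and then intersects: $C_R(X)_s C_R(X)_t \subseteq C_R(X) \cap R_{st} = C_R(X)_{st}$. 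This is a one-line fix, but your stated argument omits the needed ingredient.

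Second, and more seriously, in (c) you never prove that $C_R(R_0)$ is homogeneous, i.e. that $C_R(R_0) = \bigoplus_{s \in G} C_R(R_0)_s$, which is exactly what ``graded subring'' means in this paper. You reduce gradedness to being ``closed under multiplication,'' claiming it follows from part (a) plus the subring property of commutants; but multiplicative closure and the filter property of the components say nothing about whether an arbitrary, non-homogeneous element of $C_R(R_0)$ has all of its homogeneous components inside $C_R(R_0)$. The missing argument, which is the first half of the paper's proof of (c), is: take $x = \sum_{s \in G} x_s \in C_R(R_0)$, $e \in \ob(G)$ and $a_e \in R_e$; since $x_s a_e \in R_{se} = R_s$ and $a_e x_s \in R_{es} = R_s$ (objects act as identities on degrees, the products being zero when not composable), comparing the degree-$s$ components of $x a_e = a_e x$ gives $x_s a_e = a_e x_s$ for every $s$, hence $x_s \in C_R(R_0)_s$. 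Note that this step works precisely because the subring being centralized is $R_0$, whose components are indexed by objects; for a general homogeneous subgroup $A$ the commutant $C_R(A)$ need not be graded, so part (a) genuinely cannot supply this. (A minor point: in (d), Proposition \ref{identityprop}(a) makes cancellability of $G$ a sufficient, not necessary, condition for $1_R \in R_0$, so ``precisely when'' overstates it; your actual use of it as a sufficient condition is fine.)
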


\begin{proof}
(a) Take $s,t \in G$. If $d(s) \neq c(t)$, then
$C_R(X)_s C_R(X)_t \subseteq R_s R_t = \{ 0 \}$.
Now suppose that $d(s) = c(t)$.
Since $C_R(X)$ is a subring of $R$
it follows that $$C_R(X)_s C_R(X)_t =
(C_R(X) \cap R_s)(C_R(X) \cap R_t) \subseteq
C_R(X)C_R(X) \subseteq C_R(X).$$
Since $R$ is graded it follows that
$$C_R(X)_s C_R(X)_t = (C_R(X) \cap R_s)(C_R(X) \cap R_t)
\subseteq R_s R_t \subseteq R_{st}.$$
Therefore $C_R(X)_s C_R(X)_t \subseteq
C_R(X) \cap R_{st} = C_R(X)_{st}$.

(b) This is a consequence of the following chain of equalities
$$C_R(A)_s = C_R(A) \cap R_s = C_{R_s}(A) = C_{R_s}\left(\bigoplus_{u \in G}
A_u\right) = \bigcap_{u \in G} C_{R_s}(A_u).$$

(c) It is clear that $C_R(R_0) \supseteq \bigoplus_{s \in G}
C_R(R_0)_s$. Now we show the reversed inclusion. Take $x \in
C_R(R_0)$, $e \in \ob(G)$ and $a_e \in R_e$. Then $\sum_{s \in G} x_s
a_e = \sum_{s \in G} a_e x_s$. By comparing terms of the same
degree, we can conclude that $x_s a_e = a_e x_s$ for all $s \in G$.
Since $e \in \ob(G)$ and $a_e \in A_e$ were arbitrarily chosen this
implies that $x_s \in C_R(R_0)_s$ for all $s \in G$. Now we show the
second part of (c). Take $e \in \ob(G)$. Suppose that $c(s)=d(s)$. If
$d(s) \neq e$, then $C_{R_s}(R_e) = R_s$.
Hence, by (b), we get that $C_R(R_0)_s = \bigcap_{e \in
\ob(G)}C_{R_s}(R_e) = C_{R_s}(R_{d(s)})$. Now suppose that $c(s) \neq
d(s)$. If $c(s) \neq e \neq d(s)$, then $C_{R_s}(R_e) = R_s$.
Therefore, by (b), we get that
$C_R(R_0)_s = \bigcap_{e \in \ob(G)}C_{R_s}(R_e) = C_{R_s}(R_{c(s)})
\bigcap C_{R_s}(R_{d(s)})$; $C_{R_s}(R_{c(s)})$ equals the set of
$r_s \in R_s$ such that $a r_s = r_s a$ for all $a \in R_{c(s)}$.
Since $d(s) \neq c(s)$, we get that $r_s a_e = 0$;
$C_{R_s}(R_{d(s)})$ is treated similarly.

(d) The claim follows immediately from (c). In fact, suppose that
$c(s) \neq d(s)$. Take $r_s \in R_s$ such that $R_{c(s)}r_s = \{ 0
\}$. Then $r_s = 1_R r_s = 1_{c(s)} r_s = 0$.
The last part follows from Proposition \ref{identityprop}.
\end{proof}

The essence of the following definition stems from 
the paper \cite{cohrow} by Cohen and Rowen.

\begin{defn}\label{defnonzeroidealproperty}
Let $R$ be a ring graded by a category $G$.
The $G$-grading of $R$ is said to be \emph{right nondegenerate} (resp.
\emph{left nondegenerate}) if to each isomorphism $s \in G$
and each nonzero $x \in R_s$, the
right (resp. left) $R_0$-ideal $x R_{s^{-1}}$ (resp. $R_{s^{-1}} x$) is nonzero.
\end{defn}

As the following example shows, a grading which is right nondegenerate
is not necessarily left nondegenerate, and vice versa.

\begin{exmp}
Let $u$ be a symbol and put $R=R_0 \oplus R_1$ where $R_0 = \frac{\Z_2[x]}{(x^2)}$ and $R_1 = R_0 u$. We define multiplication in the obvious way and put $u^2=0$ and $ux=0$. It is clear that $R$ is a $\Z_2$-graded ring and $x R_1 \neq \{0\}$ but $R_1 x = \{0\}$.
Thus, the grading is right nondegenerate, but not left nondegenerate.
\end{exmp}

\subsection*{Proof of Theorem \ref{maintheorem}}
We prove the contrapositive statement. Let $C$ denote the commutant
of $Z(R_0)$ in $R$ and suppose that $I$ is a twosided ideal of $R$
with the property that $I \cap C = \{ 0 \}$. We wish to show that $I
= \{ 0 \}$. Take $x \in I$. If $x \in C$, then by the assumption $x
= 0$. Therefore we now assume that $x  = \sum_{s \in G} x_s  \in I$,
$x_s \in R_s$, $s \in G$, and that $x$ is chosen so that $x \notin
C$ with the set $S := \{ s \in G \mid x_s \neq 0 \}$ of least
possible cardinality $N$. Seeking a contradiction, suppose that $N$
is positive. First note that there is $e \in \ob(G)$ with $1_e x \in
I \setminus C$. In fact, if $1_e x \in C$ for all $e \in \ob(G)$,
then $x = 1_R x = \sum_{e \in \ob(G)} 1_e x \in C$ which is a
contradiction. Note that, by the proof of Proposition \ref{identityprop}(a), the sum
$\sum_{e \in \ob(G)} 1_e$, and hence the sum $\sum_{e \in \ob(G)}
1_e x$, is finite. By minimality of $N$, we can assume that
$c(s)=e$, $s \in S$, for some fixed $e \in \ob(G)$. Take $t \in S$.
By the right (or left) nondegeneracy of the grading there is $y \in R_{t^{-1}}$ with $x_t
y \neq 0$ (or $y x_t \neq 0$). By minimality of $N$, we can therefore from now on assume
that $e \in S$ and $d(s) = c(s) = e$ for all $s \in S$. Take $d =
\sum_{f \in \ob(G)} d_f \in Z(R_0)$ where $d_f \in R_f$, $f \in
\ob(G)$ and note that $Z(R_0) = \bigoplus_{f \in \ob(G)} Z(R_f)$.
Then $I \ni dx - xd = \sum_{s \in S} \sum_{f \in \ob(G)} (d_f x_s -
x_s d_f)= \sum_{s \in S} d_e x_s - x_s d_e$. In the $R_e$ component
of this sum we have $d_e x_e -x_e d_e=0$ since $d_e \in Z(R_e)$.
Thus, the summand vanishes for $s = e$, and hence, by minimality of
$N$, we get that $dx-xd = 0$. Since $d \in Z(R_0)$ was arbitrarily
chosen, we get that $x \in C$ which is a contradiction. Therefore $N
= 0$ and hence $S = \emptyset$ which in turn implies that $x=0$.
Since $x \in I$ was arbitrarily chosen, we finally get that $I = \{
0 \}$.
{\hfill $\square$}

$ $

The next two results show that Theorem \ref{maintheorem}
is a simultaneous generalization of Theorem \ref{TheoremOne}
and Theorem \ref{TheoremTwo}.

\begin{prop}
The grading of any strongly groupoid graded ring is right (and left) nondegenerate.
In particular, the same conclusion
holds for the grading of any strongly group graded ring.
\end{prop}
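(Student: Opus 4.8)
The plan is to verify the condition in Definition \ref{defnonzeroidealproperty} directly, exploiting the strong grading to produce an identity element inside the relevant subset products. First I would record the groupoid bookkeeping: since $G$ is a groupoid every morphism $s$ is an isomorphism, so $s^{-1}$ exists with $s s^{-1} = c(s)$ and $s^{-1} s = d(s)$ (the identity morphisms at the codomain and domain of $s$), and in particular both pairs $(s,s^{-1})$ and $(s^{-1},s)$ lie in $G^{(2)}$. Because a groupoid is cancellable, Proposition \ref{identityprop}(a) gives $1_R \in R_0$; writing $1_R = \sum_{e \in \ob(G)} 1_e$ with $1_e \in R_e$ and using that $R_e R_s = \{0\}$ unless $e = c(s)$ while $R_s R_e = \{0\}$ unless $e = d(s)$, one obtains $x = 1_{c(s)} x = x 1_{d(s)}$ for every $x \in R_s$.

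For right nondegeneracy I would fix a morphism $s$ and a nonzero $x \in R_s$ and argue by contradiction: assume $x R_{s^{-1}} = \{0\}$. Multiplying on the right by $R_s$ gives $(x R_{s^{-1}}) R_s = \{0\}$, and by associativity of the subset product this equals $x (R_{s^{-1}} R_s)$. The strong grading hypothesis yields $R_{s^{-1}} R_s = R_{s^{-1} s} = R_{d(s)}$, so $x R_{d(s)} = \{0\}$. But $x = x 1_{d(s)} \in x R_{d(s)}$, whence $x = 0$, contradicting the choice of $x$. Thus $x R_{s^{-1}} \neq \{0\}$, which is precisely right nondegeneracy. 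The argument for left nondegeneracy is entirely symmetric: assuming $R_{s^{-1}} x = \{0\}$ and multiplying on the left by $R_s$ gives $R_{c(s)} x = (R_s R_{s^{-1}}) x = \{0\}$ via $R_s R_{s^{-1}} = R_{s s^{-1}} = R_{c(s)}$, and then $x = 1_{c(s)} x \in R_{c(s)} x = \{0\}$ forces $x = 0$. Finally, the concluding assertion follows at once, since a group is exactly a groupoid with a single object, so every strongly group graded ring is a special case of the above.

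I expect the only delicate point to be the bookkeeping forced by the composition convention: one must confirm that $(s,s^{-1})$ and $(s^{-1},s)$ are composable, that $s s^{-1}$ and $s^{-1} s$ are the correct identity morphisms, and correspondingly that $1_{c(s)}$ and $1_{d(s)}$ act as the appropriate one-sided identities on $R_s$. Once the convention is pinned down, the remainder is a short manipulation of subset products together with the strong-grading identities $R_{s^{-1}} R_s = R_{d(s)}$ and $R_s R_{s^{-1}} = R_{c(s)}$, so no substantive obstacle remains.
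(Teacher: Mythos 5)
Your proof is correct and follows essentially the same route as the paper's: both rely on Proposition \ref{identityprop}(a) to get $1_R \in R_0$, the identity $x = x 1_{d(s)}$, and the strong-grading equality $R_{s^{-1}}R_s = R_{d(s)}$ (resp. $R_s R_{s^{-1}} = R_{c(s)}$). The only difference is presentational: the paper argues directly via $0 \neq x = x 1_{d(s)} \in x R_{s^{-1}} R_s$, whereas you phrase the same computation as a proof by contradiction.
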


\begin{proof}
Suppose that $R$ is a ring strongly graded by the groupoid $G$
and that we have chosen $s \in G$ and a nonzero $x \in R_s$.
By Proposition \ref{identityprop}(a) it follows that $1_R \in R_0$.
Hence $0 \neq x = x 1_R = x 1_{d(s)} \in x R_{s^{-1}}R_s$.
Therefore the right $R_0$-ideal $x R_{s^{-1}}$ is nonzero.
The proof of left nondegeneracy is done analogously.
The last part follows since a group is a groupoid.
\end{proof}

\begin{prop}
Suppose that $R = A \rtimes^{\sigma}_{\alpha} G$ is
crossed groupoid product such that $G$
has finitely many objects. If for each $s \in G$,
the element $\alpha(s,s^{-1})$ is not a zero divisor
in $A_{c(s)}$, then the standard grading of $R$ is right (and left) nondegenerate.
In particular, the same conclusion holds for the standard gradings of
crystalline graded rings.
\end{prop}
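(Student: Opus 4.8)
\subsection*{Proof of the Proposition (plan)}

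The plan is to establish both nondegeneracy conditions using the single cheapest test element of $R_{s^{-1}}$, namely $1_{A_{d(s)}}u_{s^{-1}}$, and then to read off nonvanishing directly from the regularity hypothesis on $\alpha$. Recall that the hypothesis that $G$ has finitely many objects guarantees, via Proposition \ref{identityprop}(b), that $R$ is unital so that the statement makes sense; this is the only use of that hypothesis. Fix a morphism $s \in G$ (since $G$ is a groupoid every morphism is an isomorphism) together with a nonzero homogeneous element $x = a_s u_s \in R_s$, where $0 \neq a_s \in A_{c(s)}$. From Example \ref{example1} one has $R_{s^{-1}} = A_{c(s^{-1})} u_{s^{-1}} = A_{d(s)} u_{s^{-1}}$, and both $(s,s^{-1})$ and $(s^{-1},s)$ are composable, with $ss^{-1}=c(s)$ and $s^{-1}s=d(s)$. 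For right nondegeneracy I would compute, using the multiplication rule (\ref{multiplication}) and the fact that the ring homomorphism $\sigma_s$ satisfies $\sigma_s(1_{A_{d(s)}}) = 1_{A_{c(s)}}$,
$$x\,(1_{A_{d(s)}} u_{s^{-1}}) = a_s\,\alpha(s,s^{-1})\,u_{c(s)}.$$
Since $a_s \neq 0$ and $\alpha(s,s^{-1})$ is not a zero divisor in $A_{c(s)}$, the coefficient $a_s\alpha(s,s^{-1})$ is nonzero, so the right $R_0$-ideal $xR_{s^{-1}}$ is nonzero. This is the easy half, as the regularity hypothesis applies on the nose.

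For left nondegeneracy the analogous product is
$$(1_{A_{d(s)}} u_{s^{-1}})\,x = \sigma_{s^{-1}}(a_s)\,\alpha(s^{-1},s)\,u_{d(s)},$$
and here the step I expect to require the most care is arguing that $\sigma_{s^{-1}}(a_s) \neq 0$, i.e. that $\sigma_{s^{-1}}$ is injective. For this I would specialize the compatibility relation (\ref{algebraa}) to $t = s^{-1}$ and use $\sigma_{ss^{-1}} = \sigma_{c(s)} = {\rm id}$ from (\ref{idd}) to obtain $\sigma_s(\sigma_{s^{-1}}(a_s))\,\alpha(s,s^{-1}) = \alpha(s,s^{-1})\,a_s$. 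Were $\sigma_{s^{-1}}(a_s)$ zero, the left-hand side would vanish, forcing $\alpha(s,s^{-1})\,a_s = 0$ and hence, as $\alpha(s,s^{-1})$ is not a zero divisor, $a_s = 0$, a contradiction. Thus $\sigma_{s^{-1}}(a_s) \neq 0$. Finally, applying the hypothesis to the morphism $s^{-1}$ (whose inverse is $s$) shows that $\alpha(s^{-1},s)$ is not a zero divisor in $A_{c(s^{-1})} = A_{d(s)}$, so $\sigma_{s^{-1}}(a_s)\,\alpha(s^{-1},s) \neq 0$ and therefore $R_{s^{-1}}x \neq \{0\}$.

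For the \emph{in particular} clause I would invoke \cite{nauoyst}: a crystalline graded ring is a crossed product $A \rtimes^{\sigma}_{\alpha} G$ in the sense of Example \ref{example1} over a group $G$, which is a groupoid with a single object and hence with finitely many objects, and the defining crystalline condition is precisely that each $\alpha(s,s^{-1})$ is a non-zero-divisor in $A$. The hypotheses of the first part are thus met, so its conclusion applies and the grading of any crystalline graded ring is right (and left) nondegenerate.
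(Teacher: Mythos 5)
Your proof is correct and, for the right-nondegeneracy half and the crystalline reduction, it follows essentially the same route as the paper: test with the element $1_{A_{d(s)}}u_{s^{-1}} \in R_{s^{-1}}$, compute $x\,u_{s^{-1}} = a\,\alpha(s,s^{-1})u_{c(s)}$ via (\ref{multiplication}), conclude nonvanishing from the non-zero-divisor hypothesis, and invoke \cite[Corollary 1.7]{nauoyst} for crystalline graded rings.

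Where you genuinely add something is the left half. The paper disposes of it with ``similarly one can show,'' but the left computation gives $\sigma_{s^{-1}}(a)\,\alpha(s^{-1},s)\,u_{d(s)}$, and its nonvanishing is \emph{not} symmetric to the right case: one also needs $\sigma_{s^{-1}}(a) \neq 0$, i.e. injectivity of $\sigma_{s^{-1}}$, which is not among the hypotheses. Your derivation of this injectivity — specializing (\ref{algebraa}) to $t = s^{-1}$ (legitimate, since in a groupoid any composable pair extends to a composable triple, e.g. by $r = d(t)$), using (\ref{idd}) to get $\sigma_s(\sigma_{s^{-1}}(a))\alpha(s,s^{-1}) = \alpha(s,s^{-1})a$, and then applying regularity of $\alpha(s,s^{-1})$ — is exactly the missing ingredient, so your write-up completes a step the paper leaves implicit. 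One small imprecision: the \emph{defining} crystalline condition is that each $R_s$ is freely generated by a nonzero element as a left and right $R_e$-module; that each $\alpha(s,t)$ is a non-zero-divisor is a \emph{consequence} recorded in \cite[Corollary 1.7]{nauoyst}, not the definition itself. Since you cite that result anyway, the reduction stands.
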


\begin{proof}
Take $s \in G$ and $x = au_s \in R_s$ for some nonzero $a \in A_{c(s)}$.
By Proposition \ref{identityprop}(b) it follows
that $1_R \in R_0$.
Hence, since $\alpha(s,s^{-1})$ is not a zero divisor in $A_{c(s)}$,
we get that $0 \neq a \alpha(s,s^{-1}) 1_R =
a \alpha(s,s^{-1}) u_{c(s)} = x u_{s^{-1}} \in x R_{s^{-1}}$.
Therefore, the right $R_0$-ideal $x R_{s^{-1}}$ is nonzero. Similarly one can show that the left $R_0$-ideal $R_{s^{-1}}x$ is nonzero.

Now suppose that $G$ is a group with identity element $e$.
It follows from \cite[Corollary 1.7]{nauoyst} and the discussion
preceding it that every crystalline
$G$-graded ring $R$ may be presented as $A \rtimes_{\alpha}^{\sigma} G$
where $A = R_e$, the maps $\sigma_s : A \rightarrow A$,
for $s \in G$, are ring automorphisms and for all
$s,t \in G$ the element $\alpha(s,t)$ is not a zero divisor in $R_e$
satisfying equations (\ref{idd})--(\ref{algebraa})
from Example \ref{example1} (note that $d(s)=c(t)=e$
since $G$ is a group).
\end{proof}

\begin{rem}
(a) Even if we restrict ourselves to the group graded situation,
there exist rings that are neither strongly graded nor
crystalline graded but still have a grading which is right nondegenerate.
In fact, let $K$ be any integral domain which is not a field;
take a nonzero element $\pi$ in $K$ which is not a unit.
Let $R = R_0 \oplus R_1$ be the decomposition as additive groups
given in Example \ref{example5} but with a new multiplication defined
by $e_{ij} e_{jk} = e_{ik}$, if $i=j$ or $j=k$, and
$e_{ij}e_{jk} = \pi e_{ik}$, otherwise.
Then $R$ is a ${\Bbb Z}_2$-graded ring
with a right nondegenerate grading. Indeed, suppose that
$0 \neq x = ae_{21} + be_{31} + ce_{12} + de_{13} \in R_1$
for some $a,b,c,d \in K$.
If $a \neq 0$ or $b \neq 0$, then $xR_1 \ni \pi a e_{22} + \pi b e_{32} \neq 0$
and hence $x R_1 \neq 0$.
The case when $c \neq 0$ or $d \neq 0$
is treated in a similar way.
By the equality $R_1 R_1 = \pi R_0$ it is clear that
$R$ is not strongly graded.
By an argument similar to the one used in
Example \ref{example5} it follows that $R$
is not crystalline graded.

(b) The groupoid graded ring in the end
of Example \ref{example5} can, in a similar way,
be used to construct a ring which has a right nondegenerate grading,
but which is neither
strongly graded nor a groupoid crossed product.
We leave the details in this construction to the reader.
\end{rem}

\begin{rem}
The ideal intersection property does
not hold for the commutant of the principal component
in arbitrary category graded rings.
In the following example we construct a group graded ring $R$, where $R_e$ is commutative, for which $C_R(R_e)$ does not have the ideal intersection property. This is made possible through relation (i) below, which ensures us that the grading of $R$ is not right nondegenerate nor left nondegenerate.
\end{rem}

\begin{exmp}
Let $\C[T]$ denote the polynomial ring in the indeterminate $T$. Choose some nonzero $q\in \C$ which is not a root of unity, i.e. $q^n \neq 1$ for all $n \in \Z \setminus \{0\}$. Consider the automorphism $\sigma$ of $\C[T]$ defined by
\begin{displaymath}
	\sigma : \C[T] \to \C[T], \quad p(T) \mapsto p(qT).
\end{displaymath}
Let $R$ denote the algebra generated by $\C[T]$ and two symbols $X$ and $Y$, subject to the following relations:
\begin{enumerate}
	\item[(i)] $XY=0$ and $YX=0$.
	\item[(ii)] $X p(T) = \sigma(p(T)) X$ for $p(T) \in \C[T]$.
	\item[(iii)] $Y p(T) = \sigma^{-1}(p(T)) Y$ for $p(T) \in \C[T]$.
\end{enumerate}
We can endow $R$ with a $\Z$-gradation by putting $R_0 = \C[T]$, $R_n = \C[T] X^n$ for $n>0$ and $R_n = \C[T] Y^{-n}$ for $n<0$. Furthermore, it is clear that $R_0 = \C[T]$ is maximal commutative in $R$ since $q$ is not a root of unity. It follows from (i) that the principal ideal $I=\langle X \rangle$ in $R$ has trivial intersection with $R_0$ and hence $R_0=C_R(R_0)$ does not have the ideal intersection property.
\end{exmp}

\subsection*{Proof of Theorem \ref{skewcatalg}}
We show the contrapositive statement. Suppose that $A$ is not
maximal commutative in $A \rtimes^{\sigma} G$.
Then, by Proposition \ref{gradedcommutant}(d),
there exists some $e \in \ob(G)$, some $s \in G \setminus \ob(G)$,
with $d(s)=c(s)=e$, and some nonzero $a \in A_e$, such that
$a u_s$ commutes with all of $A$.
Let $I$ be the nonzero ideal in $A \rtimes^{\sigma} G$ generated
by the element $a u_e - a u_s$
and define the homomorphism of abelian groups
$\varphi : A \rtimes^{\sigma} G \rightarrow A$
by the additive extension of the relation
$\varphi(x u_t) = x$, for $t \in G$ and $x \in A_{c(t)}$.
We claim that $I \subseteq \ker(\varphi)$.
If we assume that the claim holds, then,
since $\varphi |_A = {\rm id}_A$,
it follows that
$A \cap I = \varphi |_A (A \cap I)
\subseteq \varphi(I) = \{ 0 \}$.
Now we show the claim.
By the definition of $I$ it follows that
it is enough to show that $\varphi$
maps elements of the form
$x u_r (a u_e - a u_s) y u_t$ to zero,
where $x \in A_{c(r)}$, $y \in A_{c(t)}$
and $r,t \in G$ satisfy $d(r)=e=c(t)$.
However, since $au_s$ commutes with all
of $A$, we get that
$x u_r (a u_e - a u_s) y u_t =
x u_r (a y u_t - ay u_s u_t)=
x u_r (a y u_t - ay u_{st})=
x \sigma_r(ay) u_{rt} - x \sigma_r(ay) u_{rst}$
which, obviously, is mapped to zero by $\varphi$.
{\hfill $\square$}

\section{Applications: Injectivity of ring morphisms}\label{Application_Injectivity}

The ideal intersection property is characterized by the following important proposition,
whose proof is easy and therefore omitted.

\begin{prop}\label{IdealChar}
Let $R'$ be a subring of a ring $R$ such that $R'$ has the ideal intersection property, then
for any ring $S$ and any ring morphism $\varphi : R \to S$, the following assertions are equivalent:
\begin{enumerate}
	\item[{\rm (i)}] $\varphi : R \to S$ is injective.
	\item[{\rm (ii)}] $\varphi\lvert_{R'} : R' \to S$ is injective. (The restriction of $\varphi$ to $R'$.)
\end{enumerate}
Conversely, if $R'$ is a subring of a ring $R$ such that for any
ring $S$ and any ring morphism $\varphi : R \to S$, properties {\rm (i)} and {\rm (ii)} above
are equivalent, then $R'$ has the ideal intersection property.
\end{prop}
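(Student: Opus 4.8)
The plan is to treat the two directions separately, relying throughout on the elementary set-theoretic identity $\ker(\varphi|_{R'}) = R' \cap \ker(\varphi)$, valid for any ring morphism $\varphi : R \to S$ and any subring $R'$ of $R$. Recall that $\ker(\varphi)$ is always a twosided ideal of $R$, so the ideal intersection property is directly applicable to it.

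First I would prove the forward direction, assuming $R'$ has the ideal intersection property. The implication (i) $\Rightarrow$ (ii) is immediate, since the restriction of an injective map to a subset is injective. For (ii) $\Rightarrow$ (i) I would argue by contraposition: if $\varphi$ is not injective, then $\ker(\varphi)$ is a nonzero twosided ideal of $R$, so the ideal intersection property gives $R' \cap \ker(\varphi) \neq \{ 0 \}$; by the identity above this says precisely that $\ker(\varphi|_{R'}) \neq \{ 0 \}$, i.e. $\varphi|_{R'}$ is not injective. Hence (ii) fails, which is the desired contrapositive.

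For the converse I would again argue contrapositively: assuming $R'$ does \emph{not} have the ideal intersection property, I would exhibit a single ring $S$ and morphism $\varphi$ for which (i) and (ii) are not equivalent. By assumption there is a nonzero twosided ideal $I$ of $R$ with $R' \cap I = \{ 0 \}$. I would take $S := R / I$ with $\varphi := \pi$ the canonical projection. Then $\ker(\varphi) = I \neq \{ 0 \}$, so (i) fails, whereas $\ker(\varphi|_{R'}) = R' \cap I = \{ 0 \}$, so (ii) holds; thus (i) and (ii) are not equivalent for this particular $\varphi$, contradicting the hypothesis.

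The only point needing a little care — and the step I would flag as the main, if modest, obstacle — is verifying that $S = R/I$ is a legitimate object in this paper's sense, namely that $\pi$ is an honest ring morphism into a ring with $1_S \neq 0$. Since $R'$ is a subring, the inclusion respects identities, so $1_R \in R'$; as $1_R \neq 0$ and $R' \cap I = \{ 0 \}$, we get $1_R \notin I$, whence $I$ is proper and $1_S = 1_R + I \neq 0$. Then $\pi(1_R) = 1_S$ shows $\pi$ respects identities, and its restriction sends $1_{R'} = 1_R$ to $1_S$ as well, so both $\varphi$ and $\varphi|_{R'}$ are genuine ring morphisms. This closes the argument.
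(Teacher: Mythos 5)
Your proof is correct, and since the paper explicitly omits the proof of this proposition (``whose proof is easy and therefore omitted''), there is nothing to diverge from: your argument is exactly the standard one the authors presumably had in mind --- the identity $\ker(\varphi|_{R'}) = R' \cap \ker(\varphi)$ for the forward direction, and the quotient $\pi : R \to R/I$ as the witnessing morphism for the converse. Your final paragraph checking that $1_R \notin I$, so that $R/I$ is a legitimate ring with $1_S \neq 0$ under the paper's conventions, is a worthwhile detail that the authors' ``easy'' glosses over.
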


We now obtain the following results as corollaries to Theorem \ref{maintheorem} respectively Theorem \ref{skewgroupoidtheorem}.

\begin{cor}
If the grading of a groupoid graded ring $R=\bigoplus_{g\in G} R_g$ is right (or left) nondegenerate, then
for any ring $S$ and any ring morphism $\varphi : R \to S$, the following assertions are equivalent (where $R_0 = \bigoplus_{e\in \ob(G)} R_e$):
\begin{enumerate}
	\item[{\rm (i)}] $\varphi : R \to S$ is injective.
	\item[{\rm (ii)}] $\varphi\lvert_{C_R(Z(R_0))} : C_R(Z(R_0)) \to S$ is injective.
\end{enumerate}
\end{cor}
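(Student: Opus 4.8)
The plan is to obtain this statement as an immediate consequence of Theorem \ref{maintheorem} together with Proposition \ref{IdealChar}, so the work is essentially bookkeeping rather than new mathematics. First I would record the routine observation that the commutant $C_R(Z(R_0))$ is a subring of $R$: for any subset $X \subseteq R$ the commutant $C_R(X)$ contains $1_R$ and is closed under subtraction and multiplication, so in particular $C_R(Z(R_0))$ qualifies as a subring in the sense required by Proposition \ref{IdealChar}. This is the only genuinely verifiable point in the whole argument, and it is standard for commutants.

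Next I would invoke the nondegeneracy hypothesis directly. Since $R = \bigoplus_{g \in G} R_g$ is a groupoid graded ring whose grading is right (or left) nondegenerate, Theorem \ref{maintheorem} applies verbatim and yields that the commutant of the center of the principal component, namely $C_R(Z(R_0))$, has the ideal intersection property in $R$.

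With these two facts in hand, I would conclude by applying Proposition \ref{IdealChar} to the subring $R' = C_R(Z(R_0))$. The hypothesis of that proposition — that $R'$ has the ideal intersection property — is precisely what Theorem \ref{maintheorem} supplies under the standing nondegeneracy assumption, and its conclusion is exactly the equivalence of assertions (i) and (ii) as stated here, since $R' = C_R(Z(R_0))$ and (ii) is by definition the injectivity of $\varphi\lvert_{R'}$. There is no real obstacle: the entire analytic content has already been packaged into Theorem \ref{maintheorem}, and the corollary is a formal deduction once one has checked that the commutant is a subring so that Proposition \ref{IdealChar} is applicable.
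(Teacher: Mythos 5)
Your proposal is correct and matches the paper's own route exactly: the paper derives this corollary by combining Theorem \ref{maintheorem} (which gives the ideal intersection property for $C_R(Z(R_0))$ under the nondegeneracy hypothesis) with Proposition \ref{IdealChar} applied to the subring $R' = C_R(Z(R_0))$. Your extra remark that the commutant is indeed a subring is a harmless (and valid) piece of bookkeeping that the paper leaves implicit.
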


\begin{cor}
Suppose that $R$ is a skew groupoid
algebra with a commutative principal
component $A$ and that the grading of $R$ is defined by a
groupoid with finitely many objects. Then the following assertions are equivalent:
\begin{enumerate}
	\item[{\rm (i)}] $A$ is maximal commutative in $R$.
	\item[{\rm (ii)}] For any ring $S$ and any ring morphism $\varphi : R \to S$, the following assertions are equivalent:
\begin{enumerate}
	\item[{\rm (a)}] $\varphi : R \to S$ is injective.
	\item[{\rm (b)}] $\varphi\lvert_{A} : A \to S$ is injective.
\end{enumerate}
\end{enumerate}

\end{cor}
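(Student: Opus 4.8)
The plan is to deduce this equivalence by combining Theorem \ref{skewgroupoidtheorem} with Proposition \ref{IdealChar}, using the ideal intersection property of $A$ in $R$ as the common intermediate condition. The hypotheses on $R$ --- that it is a skew groupoid algebra with commutative principal component $A$ and that the defining groupoid has finitely many objects --- are exactly those of Theorem \ref{skewgroupoidtheorem}, so that theorem tells us that statement (i), namely that $A$ is maximal commutative in $R$, holds if and only if $A$ has the ideal intersection property in $R$. It therefore suffices to show that statement (ii) is also equivalent to $A$ having the ideal intersection property in $R$.

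Before applying Proposition \ref{IdealChar} I would first note that $A$ is genuinely a subring of $R$ in the required sense: since the grading is defined by a groupoid with finitely many objects, Proposition \ref{identityprop}(b) gives $1_R = \sum_{e \in \ob(G)} u_e \in A$, so that the inclusion $A \hookrightarrow R$ is a unital injective ring homomorphism and $A$ may legitimately play the role of $R'$ in Proposition \ref{IdealChar}.

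For the implication (i) $\Rightarrow$ (ii), I would assume that $A$ is maximal commutative; by Theorem \ref{skewgroupoidtheorem} this yields that $A$ has the ideal intersection property in $R$, and then the forward direction of Proposition \ref{IdealChar} (with $R' = A$) gives precisely the equivalence of (a) and (b) for every ring $S$ and every ring morphism $\varphi : R \to S$, which is statement (ii). For the converse implication (ii) $\Rightarrow$ (i), I would assume (ii) and invoke the converse part of Proposition \ref{IdealChar}: since for every $S$ and every $\varphi$ the conditions (a) and (b) are equivalent, $A$ has the ideal intersection property in $R$, and Theorem \ref{skewgroupoidtheorem} then yields that $A$ is maximal commutative, establishing (i).

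Since the statement is a purely formal consequence of the two cited results, there is no genuine computational obstacle; the only point demanding care is to correctly separate the forward and converse halves of Proposition \ref{IdealChar} and to apply each in the appropriate direction, all the while keeping in mind that both directions rest on the standing hypotheses of Theorem \ref{skewgroupoidtheorem} being satisfied throughout.
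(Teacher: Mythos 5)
Your proposal is correct and follows exactly the route the paper intends: the corollary is obtained by chaining Theorem \ref{skewgroupoidtheorem} (maximal commutativity of $A$ $\Leftrightarrow$ ideal intersection property) with both directions of Proposition \ref{IdealChar} (ideal intersection property $\Leftrightarrow$ the injectivity equivalence in (ii)). Your additional check via Proposition \ref{identityprop}(b) that $A$ contains $1_R$ is a harmless and reasonable extra precaution.
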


\end{document}